\newtheorem{mainthm}{Theorem}
\renewcommand{\themainthm}{\Alph{mainthm}}
\newtheorem{thm}{Theorem}[section]
\newtheorem{lem}[thm]{Lemma}
\newtheorem{conj}[thm]{Conjecture}
\newtheorem{cor}[thm]{Corollary}
\newtheorem{prop}[thm]{Proposition}
\theoremstyle{definition}
\newtheorem{defn}[thm]{Definition}
\newtheorem{rem}[thm]{Remark}
\newtheorem{exmp}[thm]{Example}
\newcommand{\lr}{\ell_R}
\newcommand{\lrz}{\ell_{R_0}}\newcommand{\lro}{\ell_{R_1}}
\newcommand{\lrt}{\ell_{R_2}}\newcommand{\ls}{\ell_S}
\newcommand\Z{\mathbb{Z}}\newcommand\R{\mathbb{R}}
\newcommand{\size}[1]{\ensuremath{\vert #1 \vert}}
\newcommand{\onto}{\twoheadrightarrow}
\title[Reflection length]{Bounding reflection length in\\ an affine
  Coxeter group}
\author[J. McCammond]{Jon McCammond}
\thanks{Work of McCammond partially supported by an NSF grant}
\author[T. K. Petersen]{T. Kyle Petersen}
\thanks{Work of Petersen partially supported by an NSA Young Investigator grant}
\date{\today}
\begin{document}

\begin{abstract}
  In any Coxeter group, the conjugates of elements in the standard
  minimal generating set are called reflections and the minimal number
  of reflections needed to factor a particular element is called its
  reflection length.  In this article we prove that the reflection
  length function on an affine Coxeter group has a uniform upper
  bound.  More precisely we prove that the reflection length function
  on an affine Coxeter group that naturally acts faithfully and
  cocompactly on $\R^n$ is bounded above by $2n$ and we also show that
  this bound is optimal.  Conjecturally, spherical and affine Coxeter
  groups are the only Coxeter groups with a uniform bound on
  reflection length.
\end{abstract}
\maketitle

Every Coxeter group $W$ has two natural generating sets: the finite
set $S$ used in its standard finite presentation and the set $R$ of
reflections formed by collecting all conjugates of the elements in
$S$.  The first generating set leads to the standard length function
$\ls\colon W \to \mathbb{N}$ and the second is used to define the
reflection length function $\lr\colon W\to \mathbb{N}$.  When $W$ is
finite, both length functions are uniformly bounded for trivial
reasons and are fairly well understood.\footnote{For finite $W$ these
  generating sets and length functions exhibit an interesting duality:
  the maximum value of $\ls$ is $\size{R}$ and the maximum value of
  $\lr$ is $\size{S}$.  See \cite{Bessis03} for further details and
  for additional illustrations of this phenomenon.}  For infinite
Coxeter groups the function $\ls$ is always unbounded because there
are only finitely many group elements of a given length as a
consequence of the fact that $S$ is finite.  Our main result is that
$\lr$ remains bounded for affine Coxeter groups and we provide an
explicit optimal upper bound.

\begin{mainthm}[Explicit affine upper bounds]\label{main:affine}
  If $W$ is an affine Coxeter group that naturally acts faithfully and
  cocompactly on $\R^n$ then every element of $W$ has reflection
  length at most $2n$ and there exist elements in $W$ with reflection
  length equal to $2n$.
\end{mainthm}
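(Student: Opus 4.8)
The plan is to exploit the semidirect product decomposition $W = L \rtimes W_0$, where $W_0$ is the associated finite Weyl group acting linearly on $\R^n$ and $L\cong\Z^n$ is the translation lattice, together with the fact that every element of $R$ is an affine reflection whose linear part is a reflection in $W_0$. Write an arbitrary element as $w = t_v u$ with $u\in W_0$ and $v\in L$, and set $d = \lrz(u) = \dim\,\mathrm{im}(u-I) = n - \dim\mathrm{Fix}(u)$. The first step is to peel off the linear part: by Carter's lemma $u = s_{\beta_1}\cdots s_{\beta_d}$ for linearly independent roots $\beta_i$ spanning $M := \mathrm{im}(u-I)$, and I would lift each $s_{\beta_i}$ to an affine reflection $r_i\in R$ in a parallel hyperplane at an integer level. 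A direct computation shows that the translation part of $r_1\cdots r_d$ lies in $M$, so $w = t_c\,(r_1\cdots r_d)$ where $t_c = w(r_1\cdots r_d)^{-1}\in W$ is a genuine lattice translation. Thus $\lr(w)\le d + \lr(t_c)$, and everything reduces to bounding reflection lengths of translations.

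The second step is the estimate $\lr(t_c)\le 2\,\mathrm{rk}(c)$, where $\mathrm{rk}(c)$ denotes the least number of coroots needed to write $c$ as an integer combination. This holds because a product of two reflections in $R$ with parallel walls is exactly a translation by an integer multiple of the corresponding coroot, and because translations commute: if $c = \sum_{i=1}^m n_i\beta_i^\vee$ then $t_c$ is a product of $m$ commuting pairs, using $2m$ reflections. Since the coroots span $\R^n$ one always has $\mathrm{rk}(c)\le n$, which already gives the clean bound $\lr(t_v)\le 2n$ for every pure translation and so settles the case $u=I$.

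The main obstacle is to keep the two steps from merely adding to more than $2n$: the naive combination only yields $d + 2n$. What is really needed is that the residual translation $t_c$ can be realized in at most $2(n-d)$ reflections, i.e. that $\mathrm{rk}(c)\le n-d = \dim\mathrm{Fix}(u)$. The difficulty is that $\mathrm{Fix}(u)$ need not contain any coroots, so one cannot simply translate ``inside'' the fixed space. The resolution I expect is to use the remaining freedom in the construction -- the choice of the Carter roots $\beta_i$ and of the integer levels of the lifted walls -- to absorb as much of $v$ as possible into the elliptic factor, thereby forcing the leftover $c$ into an $(n-d)$-dimensional coroot-spanned complement. Making this simultaneous choice of roots and levels work uniformly, rather than type by type, is the technical heart of the argument, and is presumably where the auxiliary root subsystems and their reflection-length functions enter.

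Finally, for optimality I would take a translation $t_v$ whose vector has coroot-rank $n$, so that it genuinely requires all $n$ coroot directions. The matching lower bound rests on the inequality $\lr(t_v)\ge 2\,\mathrm{rk}(v)$, whose factor of two is the crux and comes from a pairing phenomenon: in any factorization $t_v = r_1\cdots r_N$ the linear parts must multiply to the identity in $W_0$, forcing the roots of the $r_i$ to cancel so that each independent coroot direction contributing to $v$ is charged to at least two reflections. A determinant (parity) argument gives $N$ even and handles the one-dimensional base case, while an induction on $\dim\mathrm{im}(\bar w - I)$ propagates the pairing; the delicate point is to rule out trading a translation direction for a single reflection, which would demand a coordinate hyperplane unavailable in $R$. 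Combining the two halves yields $\lr(w)\le 2n$ with equality for such $t_v$.
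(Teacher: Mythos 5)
Your upper-bound strategy has a genuine gap, and you have located it yourself: after peeling off the elliptic part you are left with $\lr(w)\le d+2\,\mathrm{rk}(c)$, which is only $\le 2n$ if $\mathrm{rk}(c)\le n-d$ (or at least $n-d/2$), and the paragraph in which you propose to arrange this by a simultaneous choice of Carter roots and integer levels is a statement of hope, not an argument. The needed inequality is genuinely hard --- $\mathrm{Fix}(u)$ may contain no coroots at all, as you note, and controlling where the residual translation lands is essentially the problem of computing $\lr$ exactly, which the paper explicitly says it cannot do. The paper avoids this obstacle by decomposing in the opposite order and, crucially, by absorbing the translation part at cost $k$ rather than $2k$: if $\lambda$ has integral dimension $k$, write $t_\lambda=(r_{\alpha_1,c_1}r_{\alpha_1})\cdots(r_{\alpha_k,c_k}r_{\alpha_k})$ and use closure of $R$ under conjugation to push the $k$ origin-fixing reflections $r_{\alpha_i}$ to the end of the word; the product $u$ of the first $k$ remaining reflections then satisfies $\lr(u)\le k$ and sends the origin to $\lambda$. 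Now $v_0=u^{-1}w$ fixes the origin, hence lies in $W_0$ and has $\lr(v_0)\le n$ by the spherical bound, so $\lr(w)\le k+n\le 2n$. No compatibility between the translation and elliptic parts of $w$ is ever required; the elliptic factor is simply whatever $u^{-1}w$ turns out to be.

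Your lower bound for optimality is also only a sketch. The inequality $\lr(t_v)\ge 2\,\mathrm{rk}(v)$ is the right statement, but ``pairing phenomenon'' and ``induction propagating the pairing'' are placeholders, not proofs; the actual argument is short and you should compare it with yours. In any factorization $t_\lambda=r_1\cdots r_m$ the coroots of the $r_i$ span a subspace containing $\lambda$, hence of dimension at least $k$ --- this step silently requires that the real and integral dimensions of $\lambda$ agree, a nontrivial fact the paper proves by intersecting $\Phi$ and $L$ with a minimal coroot subspace. One then moves $k$ reflections with linearly independent coroots to the front, calls their product $u$ and the remainder $v$, and projects to $W_0$: Carter's criterion gives $\lrz(p(u))=k$, and since $p(t_\lambda)=1$ we get $p(v)=p(u)^{-1}$, whence $m-k\ge\lr(v)\ge\lrz(p(v))=k$ and so $m\ge 2k$. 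Finally, you assert but never produce a translation vector of coroot rank $n$; this needs the (one-line) observation that any lattice vector avoiding the finitely many proper coroot subspaces has real, hence integral, dimension $n$.
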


The article is structured as follows.  The first two sections recall
basic facts, the third establishes a key technical result and the
fourth contains the proof of our main result.  The final section
contains a conjecture about infinite non-affine Coxeter groups.

\section{Reflection length}\label{sec:length}

We assume the reader is familiar with the basic theory of reflection
groups (as described, for example, in \cite{Humphreys90}) and we
generally follow the standard notational conventions.

\begin{defn}[Reflection length]\label{def:lr}
  Let $W$ be a Coxeter group with standard generating set $S$. A
  \emph{reflection} in $W$ is any conjugate of an element of $S$ and
  we use $R$ to denote the set of all reflections in $W$.  In other
  words, $R=\{ wsw^{-1} \mid s \in S, w \in W\}$.  We should note that
  unless $W$ is a finite group, $R$ is an infinite set.  For any
  element $w \in W$, its \emph{reflection length} $\lr(w)$ is the
  minimal number of reflections whose product is $w$.  Thus $w = r_1
  r_2 \cdots r_k$ with $r_i \in R$ means $\lr(w) \leq k$.
  Alternatively, $\lr(w)$ can be defined as the combinatorial distance
  from the vertex labeled by the identity to the vertex labeled by $w$
  in the Cayley graph of $W$ with respect to $R$.
\end{defn}

Since combinatorial distance defines a metric on the vertex set of any
graph and Cayley graphs are homogeneous in the sense that there is a
vertex-transitive group action, metric properties of the distance
function translate into properties of $\lr$.  Symmetry and the
triangle inequality, for example, imply that $\lr(w) = \lr(w^{-1})$,
and $\lr(uv)\leq \lr(u) + \lr(v)$, respectively.  

It is sufficient to investigate reflection length in irreducible
Coxeter groups because of the following elementary result.

\begin{prop}[Reducible Coxeter groups]\label{prop:reducible}
  When $W$ is a reducible Coxeter group, its standard generating set
  $S$ has a nontrivial partition $S = S_1 \sqcup S_2$ in which every
  element in $S_1$ commutes with every element in $S_2$.  In this
  context, $W = W_1 \times W_2$ where $W_i$ denotes the parabolic
  subgroup generated by $S_i$, the reflections $R$ in $W$ can be
  partitioned as $R=R_1 \sqcup R_2$ where $R_i$ denotes the
  reflections in $W_i$ and when $w \in W$ is written in the form
  $w=w_1w_2$ with $w_i \in W_i$, we have $\lr(w) = \lro(w_1) +
  \lrt(w_2)$.
\end{prop}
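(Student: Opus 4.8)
The plan is to verify the three assertions in turn, treating the first two as bookkeeping about the Coxeter presentation and reserving the real work for the length formula. A Coxeter group is reducible precisely when its Coxeter diagram is disconnected, so the vertex set $S$ splits as $S = S_1 \sqcup S_2$ with no edges between the two pieces; by the usual diagram conventions, the absence of an edge between $s \in S_1$ and $t \in S_2$ records the relation $(st)^2 = 1$, i.e. $s$ and $t$ commute. This immediately produces the stated partition with its commuting property.

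For the direct product decomposition I would read off the standard presentation of $W$. Since every generator in $S_1$ commutes with every generator in $S_2$, the defining relations separate into relations among the $S_1$ generators, relations among the $S_2$ generators, and the cross-commuting relations, and this is exactly a presentation of the direct product $W_1 \times W_2$ of the two parabolic subgroups. To partition the reflections I would compute $w s w^{-1}$ for $s \in S_1$ and $w = w_1 w_2$: because $w_2 \in W_2$ commutes with $s \in W_1$, one gets $w s w^{-1} = w_1 s w_1^{-1} \in W_1$, so every reflection arising from $S_1$ lies in $R_1 := R \cap W_1$, and symmetrically for $S_2$. As $W_1$ and $W_2$ meet only in the identity inside the direct product, the nontrivial subsets $R_1$ and $R_2$ are disjoint and $R = R_1 \sqcup R_2$.

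The length formula is where the argument has content, and I would prove the two inequalities separately. The bound $\lr(w) \le \lro(w_1) + \lrt(w_2)$ is immediate: concatenating a shortest reflection factorization of $w_1$ in $W_1$ with one of $w_2$ in $W_2$ expresses $w = w_1 w_2$ as a product of $\lro(w_1) + \lrt(w_2)$ elements of $R_1 \cup R_2 \subseteq R$. For the reverse inequality I would start from a shortest factorization $w = r_1 \cdots r_k$ with $k = \lr(w)$ and each $r_i \in R = R_1 \sqcup R_2$. Since every reflection in $R_1$ lies in $W_1$ and every reflection in $R_2$ lies in $W_2$, the two kinds of factors commute, so I can rearrange the product to move all $R_1$ factors to the front, obtaining $w = u_1 u_2$ with $u_1$ the product of the $p$ factors in $R_1$, $u_2$ the product of the $q$ factors in $R_2$, and $p + q = k$.

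The crux is that this rearrangement must recover the original decomposition. Here I would invoke uniqueness of expression in a direct product: since $u_1 \in W_1$, $u_2 \in W_2$, and $w_1 w_2 = w = u_1 u_2$, we must have $u_1 = w_1$ and $u_2 = w_2$. Consequently $w_1$ is a product of $p$ reflections from $R_1$ and $w_2$ a product of $q$ reflections from $R_2$, so $\lro(w_1) \le p$ and $\lrt(w_2) \le q$, whence $\lro(w_1) + \lrt(w_2) \le p + q = k = \lr(w)$. Combining the two inequalities gives the equality. I expect no serious obstacle; the only point deserving care is that the commuting rearrangement together with the uniqueness of the direct-product factorization forces the reflection count within each factor to be at least the reflection length of the corresponding component.
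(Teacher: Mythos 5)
Your proof is correct. There is nothing in the paper to compare it against: the authors state this proposition without proof, offering it as an elementary result, so your write-up simply supplies the details they omit. Your argument is the natural one, and you correctly identify and handle the only point of substance --- after sorting a minimal reflection factorization of $w$ so that the $R_1$-factors precede the $R_2$-factors (legitimate because each $R_i$ lies in $W_i$ and the factors $W_1$, $W_2$ commute elementwise), the uniqueness of the expression $w = w_1 w_2$ in the direct product forces the two sorted blocks to equal $w_1$ and $w_2$, which yields the nontrivial inequality $\lro(w_1) + \lrt(w_2) \leq \lr(w)$.
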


\section{Affine Coxeter groups}\label{sec:affine}

Next we review the construction of an affine Coxeter group from a
crystallographic root system.

\begin{defn}[Affine Coxeter groups]\label{def:affine}
  Recall that a crystallographic root system $\Phi$ is a finite
  collection of vectors that span a real Euclidean space $V$
  satisfying a few elementary properties and that an affine Coxeter
  group $W$ can be constructed from $\Phi$ as follows.  For each
  $\alpha \in \Phi$ and $i\in \Z$ let $H_{\alpha,i}$ denote the
  (affine) \emph{hyperplane} of solutions to the equation $\langle
  x,\alpha \rangle = i$ where the brackets denote the standard inner
  product on $V$.  The unique nontrivial isometry of $V$ that fixes
  $H_{\alpha,i}$ pointwise is a \emph{reflection} that we call
  $r_{\alpha,i}$.  The collection $R = \{r_{\alpha,i} \mid \alpha \in
  \Phi, i \in \Z\}$ generates the affine Coxeter group $W$ and $R$ is
  its set of reflections in the sense of Definition~\ref{def:lr}.  A
  standard minimal generating set $S$ can be obtained by restricting
  to those reflections that reflect across the facets of a certain
  polytope in $V$.
\end{defn}

\begin{rem}[No finite factors]\label{rem:no-finite-factors}
  Every irreducible affine Coxeter group can be constructed from its
  crystallographic root system as described in
  Definition~\ref{def:affine} but the construction also works equally
  well when the root system is reducible.  It is not, however,
  sufficient to construct arbitrary reducible affine Coxeter groups
  because it always constructs affine Coxeter groups with affine
  irreducible components (i.e. no finite irreducible components).  The
  affine Coxeter groups constructible from a root system in this way
  can also be characterized as those that naturally act faithfully and
  cocompactly on some Euclidean space.
\end{rem}

For each affine Coxeter group $W$ constructed from a root system
$\Phi$ there is a finite Coxeter group $W_0$ related to $W$ in two
distinct ways.

\begin{defn}[Subgroups and quotients]\label{def:finite}
  We abbreviate $r_{\alpha,0}$ and $H_{\alpha,0}$ as $r_\alpha$ and
  $H_\alpha$, respectively.  The hyperplanes $H_\alpha$ are precisely
  the ones that contain the origin and the reflections $R_0 =
  \{r_\alpha \mid \alpha \in \Phi\}$ generate a finite Coxeter group
  $W_0$ that contains all elements of $W$ that fix the origin.  This
  embeds $W_0$ as a subgroup of $W$.  There is a group homomorphism
  $p\colon W \onto W_0$ defined by sending each generating reflection
  $r_{\alpha,i}$ in $W$ to $r_\alpha$ in $W_0$.
\end{defn}

Because the map $p\colon W \onto W_0$ sends $R$ to $R_0$, it sends
reflection factorizations to reflection factorizations, proving the
following.

\begin{prop}[Lengths and quotients]\label{prop:quotient-length}
  If the map $p:W \onto W_0$ sends $u$ to $u_0$ then $\lr(u) \geq
  \lrz(u_0)$.
\end{prop}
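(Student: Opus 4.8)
The plan is to start from a shortest reflection factorization of $u$ and push it through the homomorphism $p$. Write $u = r_1 r_2 \cdots r_k$ with each $r_i \in R$ and $k = \lr(u)$, a factorization realizing the reflection length of $u$. Since $p$ is a group homomorphism, applying it yields $u_0 = p(u) = p(r_1)p(r_2)\cdots p(r_k)$, so that $u_0$ is written as a product of $k$ elements of the image set $p(R)$.

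The key step is the observation recorded just before the statement: $p$ carries $R$ into $R_0$. Concretely, every reflection $r_i \in R$ equals $r_{\alpha,j}$ for some root $\alpha \in \Phi$ and some $j \in \Z$, and the defining formula for $p$ in Definition~\ref{def:finite} gives $p(r_{\alpha,j}) = r_\alpha \in R_0$. Hence each factor $p(r_i)$ is a genuine reflection of $W_0$, and $u_0$ is exhibited as a product of exactly $k$ reflections from $R_0$. By the definition of reflection length as the minimal number of reflection factors (Definition~\ref{def:lr}), this forces $\lrz(u_0) \le k$, and since $k = \lr(u)$ we conclude $\lrz(u_0) \le \lr(u)$, which is the desired inequality.

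I expect no serious obstacle here: the argument is an instance of the general principle that a surjective homomorphism sending one distinguished generating set into another cannot increase the associated word-length. The only point that even requires attention is that $p$ sends \emph{every} element of $R$ into $R_0$, not merely the standard generators in $S$; but this is immediate because $p$ is prescribed directly on all the $r_{\alpha,i}$, and these exhaust $R$ by Definition~\ref{def:affine}. Thus the proof reduces to assembling these observations in order, with no estimates or case analysis needed.
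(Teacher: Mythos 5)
Your proof is correct and is exactly the paper's argument: the paper notes that since $p$ sends $R$ to $R_0$, it sends reflection factorizations to reflection factorizations, which is precisely your observation that pushing a minimal factorization $u = r_1 \cdots r_k$ through $p$ exhibits $u_0$ as a product of $k$ reflections in $R_0$. You have simply written out the details that the paper leaves implicit.
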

  
Finite Coxeter groups such as $W_0$ are also known as spherical
Coxeter groups because they act by isometries on spheres and we
digress for a moment to recall a few of their key properties.  In a
finite Coxeter group, reflection length has a geometric interpretation
that yields a spherical version of Theorem~\ref{main:affine} as an
immediate corollary.  The following properties were observed by Carter
in \cite{Carter72}.

\begin{prop}[Spherical reflection length]\label{prop:spherical}
  The reflection length of an element $w$ in a finite Coxeter group
  $W_0$ is equal to the codimension of the subspace of vectors that
  $w$ fixes in the standard orthogonal representation of $W_0$.  In
  addition, a reflection factorization $w = r_1r_2\cdots r_m$ is of
  minimum length if and only if the vectors normal to the hyperplanes
  of these reflections are linearly independent.
\end{prop}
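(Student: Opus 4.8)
The plan is to prove both assertions at once by identifying $\lr(w)$ with the dimension of the \emph{moved space} $\mathrm{Mov}(w):=\operatorname{im}(w-I)$. Since $w$ acts orthogonally, $\mathrm{Mov}(w)=\mathrm{Fix}(w)^{\perp}$ where $\mathrm{Fix}(w)=\ker(w-I)$, so $\dim\mathrm{Mov}(w)=\operatorname{codim}\mathrm{Fix}(w)=:d$ is exactly the codimension in the statement. The first step is the lower bound $\lr(w)\ge d$. The identity $uv-I=(u-I)v+(v-I)$ gives $\mathrm{Mov}(uv)\subseteq\mathrm{Mov}(u)+\mathrm{Mov}(v)$, and each reflection $r_\alpha$ satisfies $\mathrm{Mov}(r_\alpha)=\R\alpha$. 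Hence for any factorization $w=r_1\cdots r_m$ with $\alpha_i$ the root of $r_i$ one has $\mathrm{Mov}(w)\subseteq\operatorname{span}(\alpha_1,\dots,\alpha_m)$, so $d\le m$; taking $m=\lr(w)$ proves the bound. This inclusion also yields one direction of the characterization: a minimal factorization cannot have dependent roots, since dependence forces $\dim\operatorname{span}(\alpha_1,\dots,\alpha_m)<m$, and (once we show $\lr(w)=d$) a dimension count then contradicts minimality.

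The second step is a telescoping computation showing that independence of the roots is already sufficient. Given $w=r_1\cdots r_m$ and $v\in V$, set $x_m=v$ and $x_{k-1}=r_k(x_k)$, so that $x_0=w(v)$ and $w(v)-v=\sum_{k=1}^m(r_k-I)(x_k)=-\sum_{k=1}^m\langle x_k,\alpha_k^\vee\rangle\,\alpha_k$. If the $\alpha_k$ are linearly independent and $v\in\mathrm{Fix}(w)$, then every coefficient $\langle x_k,\alpha_k^\vee\rangle$ must vanish, which forces $x_{k-1}=x_k$ for all $k$, hence $x_k=v$ and $\langle v,\alpha_k\rangle=0$ for every $k$. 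Therefore $\mathrm{Fix}(w)=\bigcap_k H_{\alpha_k}$ and $\mathrm{Mov}(w)=\operatorname{span}(\alpha_1,\dots,\alpha_m)$, so $m=d$. Thus any factorization with linearly independent roots is automatically of length $d$ and minimal; combined with the lower bound this finishes the characterization, provided the upper bound is in hand.

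The remaining and hardest step is the upper bound $\lr(w)\le d$, equivalently the production of a factorization whose roots are linearly independent. I would argue by induction on $n=\dim V$ using Steinberg's theorem on pointwise stabilizers. When $\mathrm{Fix}(w)\ne 0$ (that is, $d<n$), the element $w$ fixes $\mathrm{Fix}(w)$ pointwise and so lies in the subgroup $W_M$ generated by the reflections whose roots lie in $M:=\mathrm{Mov}(w)$; since $\mathrm{Mov}(w)=M$, these roots span $M$ and $W_M$ is a finite reflection group acting essentially on $M$ with rank $d<n$. The inductive hypothesis gives $\ell_{W_M}(w)=d$, and reflections of $W_M$ are reflections of $W_0$, so $\lr(w)\le d$.

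The genuine obstacle, and where essentially all of the work lies, is the \emph{elliptic} case $\mathrm{Fix}(w)=0$, so $d=n$ and the dimension cannot be reduced by dropping into a proper reflection subgroup. Here one must exhibit $n$ reflections of the root system with linearly independent roots whose product is $w$; the difficulty is that the direction one would naively reflect across in order to split off a fixed vector of $r_\alpha w$ need not be spanned by any root of $\Phi$. I would resolve this through a shortening lemma: starting from an arbitrary reflection factorization and showing that dependent roots can always be removed. Concretely, I would move a minimal linearly dependent subset of the roots to the front using the Hurwitz (conjugation) action, which preserves both the element and the word length while altering roots only by reflections, and then use the proper span of that subset to pass to a smaller reflection subgroup and invoke induction. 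Establishing this shortening step uniformly — rather than case by case over the classification — is the technical heart of the argument and the part I expect to require the most care.
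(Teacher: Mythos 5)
First, a point of reference: the paper itself offers no proof of this proposition --- it is quoted from Carter and cited as \cite{Carter72} --- so your attempt has to stand entirely on its own. Most of it does. The lower bound via $\mathrm{Mov}(uv)\subseteq\mathrm{Mov}(u)+\mathrm{Mov}(v)$ is correct; your telescoping identity correctly shows that a factorization with linearly independent roots satisfies $\mathrm{Fix}(w)=\bigcap_k H_{\alpha_k}$, hence has length exactly $d=\operatorname{codim}\mathrm{Fix}(w)$ and is minimal; and the Steinberg reduction for non-elliptic $w$ is sound. The genuine gap is exactly where you place it: the upper bound $\lr(w)\le d$ for elliptic $w$ is never proved, and the shortening scheme you sketch for it breaks down. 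A minimal linearly dependent subset of the roots need not span a \emph{proper} subspace: it can have size $n+1$ with every $n$-element subset independent, in which case its span is all of $V$ and there is no smaller reflection subgroup to pass to. This is not a fringe case --- elliptic elements are precisely those whose factorizations have roots spanning $V$. Concretely, in type $A_2$ the three positive roots satisfy $\alpha_{13}=\alpha_{12}+\alpha_{23}$ and are pairwise independent, so for the factorization $r_{\alpha_{12}}r_{\alpha_{23}}r_{\alpha_{13}}$ (whose product is a single reflection, hence very far from reduced) the unique minimal dependent subset is the whole set of three roots, spanning the plane: your procedure cannot take even its first step.

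The missing ingredient is a lemma of Carter (also Brady--Watt): if $\alpha$ is \emph{any} root lying in $\mathrm{Mov}(w)$, then $\dim\mathrm{Fix}(r_\alpha w)=\dim\mathrm{Fix}(w)+1$. Indeed, write $\alpha=w(v)-v$; orthogonality of $w$ gives $|v+\alpha|=|v|$, which forces $\langle v,\alpha^\vee\rangle=-1$, whence $r_\alpha w(v)=r_\alpha(v+\alpha)=v$, while $r_\alpha w$ still fixes $\mathrm{Fix}(w)\subseteq H_\alpha$ pointwise; the opposite inequality is your subadditivity bound. With this lemma the upper bound follows by induction on $d=\dim\mathrm{Mov}(w)$ rather than on $\dim V$: if $w\ne 1$ there is a root in $\mathrm{Mov}(w)$ (by Steinberg, as you already argue --- and trivially in the elliptic case, where $\mathrm{Mov}(w)=V$ and every root qualifies), and then $w=r_\alpha\cdot(r_\alpha w)$ with $r_\alpha w$ of strictly smaller moved space. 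So the elliptic case, far from being the hard one, is the case where the needed root is easiest to find; no Hurwitz-action shortening and no appeal to the classification is required.
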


\begin{cor}[Spherical upper bounds]\label{cor:spherical}
  Let $W_0$ be a finite Coxeter group whose standard representation
  acts on $\R^n$ by orthogonal transformations. For all $w \in W_0$,
  $\lr(w) \leq n$ and for elements that only fix the origin, $\lr(w) =
  n$.  More concretely, multiplying the elements in its standard
  minimal generating set $S_0$ produces an element $w$ with $\lr(w) =
  n$.
\end{cor}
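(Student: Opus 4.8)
The plan is to derive everything directly from Proposition~\ref{prop:spherical}, which already packages both the dimension formula and the linear-independence criterion that the corollary needs; viewed correctly, the statement is essentially a dictionary translation of that proposition into the language of an upper bound and a witnessing example.

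First I would establish the inequality $\lr(w)\le n$. By Proposition~\ref{prop:spherical}, $\lr(w)$ equals the codimension in $\R^n$ of the subspace $V^w$ of vectors fixed by $w$. Since $V^w$ is a (possibly trivial) subspace of $\R^n$ its dimension is at least $0$, so its codimension is at most $n$, giving $\lr(w)\le n$ for every $w\in W_0$. For the equality case, note that the standard representation is linear, so every element fixes the origin; the phrase ``only fix the origin'' therefore means precisely that $V^w=\{0\}$. Such a $w$ has $\dim V^w = 0$, hence $\mathrm{codim}\,V^w = n$, and Proposition~\ref{prop:spherical} yields $\lr(w)=n$.

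It then remains to exhibit a concrete element attaining the bound, and here I would invoke the second half of Proposition~\ref{prop:spherical}. Because $W_0$ is built from a root system $\Phi$ spanning $V=\R^n$, its standard minimal generating set has the form $S_0=\{s_1,\dots,s_n\}$, where $s_i=r_{\alpha_i}$ is the reflection whose hyperplane has normal the simple root $\alpha_i$. The simple roots form a basis of $\R^n$ and are in particular linearly independent, so the expression $w=s_1 s_2\cdots s_n$ is a reflection factorization whose normal vectors are linearly independent. By the minimum-length criterion of Proposition~\ref{prop:spherical} this factorization is already of minimum length, whence $\lr(w)=n$.

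Since Proposition~\ref{prop:spherical} does all of the geometric work, I do not expect a serious obstacle. The one point that needs care is the bookkeeping of dimensions: I must confirm that the number of simple reflections equals $n$, which is exactly the hypothesis that $\Phi$ spans $\R^n$ (equivalently, that the standard representation is essential). Without this assumption the product $s_1\cdots s_n$ could instead fix a nonzero global subspace and fail to attain the bound, so I would make the spanning/essential hypothesis explicit at the moment it is used.
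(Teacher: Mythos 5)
Your proof is correct and matches the paper's (implicit) argument: the paper states this corollary without proof as an immediate consequence of Proposition~\ref{prop:spherical}, and your derivation—codimension of the fixed subspace is at most $n$, equals $n$ exactly when the fixed space is trivial, and the product of the simple reflections attains the bound because the simple roots are linearly independent—is precisely that intended derivation. Your closing remark about the essential/spanning hypothesis is a reasonable point of care, though for the standard representation of a finite Coxeter group the simple roots form a basis of $\R^n$ by construction, so $n=\size{S_0}$ automatically.
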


Proposition~\ref{prop:spherical} also has consequences for some
elements in $W$.

\begin{cor}[Linearly independent roots]\label{cor:lin-ind}
  If $w = r_1 r_2 \cdots r_m$ is a reflection factorization of $w\in
  W$ in which the roots of the reflections are linearly independent,
  then $\lr(w) = m$.
\end{cor}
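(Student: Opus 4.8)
The plan is to prove the two inequalities separately. The upper bound $\lr(w) \leq m$ is immediate, since the hypothesis $w = r_1 r_2 \cdots r_m$ already exhibits $w$ as a product of $m$ reflections. The real content is the lower bound $\lr(w) \geq m$, and the strategy is to push the problem into the finite quotient $W_0$, where the geometric criterion of Proposition~\ref{prop:spherical} can be applied.

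First I would record how the quotient map interacts with roots. Writing each reflection as $r_i = r_{\alpha_i, j_i}$ for some $\alpha_i \in \Phi$ and $j_i \in \Z$, the definition of $p$ in Definition~\ref{def:finite} sends $r_i = r_{\alpha_i,j_i}$ to $r_{\alpha_i} \in W_0$. Thus $p$ forgets the integer $j_i$ but retains the root $\alpha_i$, and so the image factorization is $p(w) = r_{\alpha_1} r_{\alpha_2} \cdots r_{\alpha_m}$ in $W_0$. The hyperplanes of the reflections $r_{\alpha_i}$ in the standard representation of $W_0$ have normal vectors $\alpha_1, \dots, \alpha_m$, which are linearly independent by hypothesis.

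Next I would invoke the spherical criterion. By the second assertion of Proposition~\ref{prop:spherical}, a reflection factorization in $W_0$ whose normal vectors are linearly independent has minimal length, and hence $\lrz(p(w)) = m$. Finally, Proposition~\ref{prop:quotient-length} gives $\lr(w) \geq \lrz(p(w)) = m$, and combining this with the upper bound yields $\lr(w) = m$, as claimed.

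I do not anticipate a genuine obstacle here, as the argument is a short chain through the two earlier propositions. The one point that deserves care is the observation that $p$ preserves the root of each reflection, since this is exactly what transports the linear-independence hypothesis from $W$ into the standard representation of $W_0$, where it can actually be exploited.
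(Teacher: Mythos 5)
Your proof is correct and follows essentially the same route as the paper's: both establish the upper bound from the given factorization and obtain the lower bound by projecting to $W_0$ via $p$, applying Proposition~\ref{prop:spherical} to conclude $\lrz(p(w)) = m$ from the linear independence of the roots, and then using Proposition~\ref{prop:quotient-length}. Your version simply makes explicit the observation (left implicit in the paper) that $p$ preserves the root of each reflection.
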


\begin{proof}
  The factorization shows $\lr(w) \leq m$.  For the lower bound note
  that each reflection $r_i \in R$ is $r_{\alpha_i,c_i}$ for some
  $\alpha_i \in \Phi$ and $c_i \in \Z$, and by hypothesis the roots
  $\alpha_i$ are linearly independent.  If we let $w_0 = p(w)$ then by
  Propositions~\ref{prop:quotient-length} and~\ref{prop:spherical} we
  have $\lr(w) \geq \lrz(w_0) = m$.
\end{proof}

Although we never use the following result established by Solomon in
\cite{Solomon63}, we mention it because it highlights how reflection
length captures fundamental aspects of the behavior of finite Coxeter
groups.

\begin{prop}[Solomon's factorization formula]\label{prop:solomon}
  For each finite Coxeter group $W_0$, the polynomial recording the
  distribution of reflection lengths factors completely over the
  integers.  In particular, \[f(x) = \sum_{w \in W_0} x^{\lr(w)} =
  \prod_{i=1}^n (1+ e_ix),\] where the $e_i$ are the exponents of
  $W_0$.
\end{prop}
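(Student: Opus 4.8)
The plan is to recognize this as the Shephard--Todd--Solomon eigenvalue formula in disguise and to prove it by computing a single bigraded Hilbert series in two different ways. By Carter's interpretation recorded in Proposition~\ref{prop:spherical}, $\lr(w)$ equals the codimension of the fixed space $V^w$ of $w$ in the standard representation $V=\R^n$, so $f(x)=\sum_{w} x^{\,n-\dim V^w}=x^n\sum_{w}(1/x)^{\dim V^w}$. It therefore suffices to prove the equivalent statement $\sum_{w\in W_0} s^{\dim V^w}=\prod_{i=1}^n(s+e_i)$, since substituting $s=1/x$ and clearing the factor $x^n$ recovers the asserted product. I would set this up with the invariant theory of $W_0$: by Chevalley's theorem the invariants $S(V^*)^{W_0}$ form a polynomial ring $\R[f_1,\dots,f_n]$ with $\deg f_i=d_i=e_i+1$ and $\prod_i d_i=\size{W_0}$.

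The central input is Solomon's theorem from \cite{Solomon63} that the module $\Omega=\bigl(S(V^*)\otimes\Lambda(V^*)\bigr)^{W_0}$ of invariant polynomial differential forms is a free exterior algebra over $S(V^*)^{W_0}$ on the generators $df_1,\dots,df_n$, where $df_i$ carries polynomial degree $e_i$ and exterior degree $1$. Reading the bigraded Hilbert series of $\Omega$ off this free basis gives $\prod_{i=1}^n (1+ut^{e_i})/(1-t^{d_i})$, while Molien's formula computes the very same series as an average over the group, so that
\[\frac{1}{\size{W_0}}\sum_{w\in W_0}\frac{\det(1+u\,w)}{\det(1-t\,w)}=\prod_{i=1}^n\frac{1+ut^{e_i}}{1-t^{d_i}},\]
where the determinants are taken on $V\otimes\mathbb{C}$ and I have used that $V\cong V^*$ as real $W_0$-modules.

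The last step is a specialization engineered to isolate the eigenvalue $1$, that is, the fixed space. Writing the eigenvalues of $w$ as $\lambda_1,\dots,\lambda_n$, I would set $1+u=s(1-t)$ with $s$ held fixed and let $t\to 1$. Then every factor $(1+u\lambda_j)/(1-t\lambda_j)$ with $\lambda_j=1$ equals $s$ exactly, while every factor with $\lambda_j\neq 1$ tends to $(1-\lambda_j)/(1-\lambda_j)=1$, so the $w$-term tends to $s^{\dim V^w}$; because the sum over $W_0$ is finite this yields $\tfrac{1}{\size{W_0}}\sum_w s^{\dim V^w}$ in the limit. On the right, $1-t^m\sim m(1-t)$ gives $(1+ut^{e_i})/(1-t^{d_i})\to(s+e_i)/d_i$, so the product tends to $\prod_i(s+e_i)/\size{W_0}$. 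Equating the two limits produces $\sum_w s^{\dim V^w}=\prod_i(s+e_i)$ and hence the theorem. The genuine content, and the only hard step, is Solomon's freeness theorem for $\Omega$; everything after it is bookkeeping with Molien's formula together with the elementary termwise limit. As an alternative route that avoids differential forms, one could instead invoke the factorization $\chi(\mathcal{A},t)=\prod_i(t-e_i)$ of the characteristic polynomial of the Coxeter arrangement and combine it with Carter's result through a reciprocity of the form $f(x)=(-x)^n\chi(\mathcal{A},-1/x)$, but making that bridge precise requires identifying the number of group elements with a prescribed fixed space as a M\"obius number, an input of comparable depth.
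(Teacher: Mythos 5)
The paper does not actually prove this proposition: it is stated as background, attributed to \cite{Solomon63}, and explicitly never used in the sequel, so there is no internal proof to compare against. Your argument is correct, and it is in essence the classical proof---indeed, essentially the argument of the cited reference itself. The chain is sound at every step: Carter's interpretation (Proposition~\ref{prop:spherical}) converts $f(x)$ into $x^n\sum_w (1/x)^{\dim V^w}$, so the proposition is equivalent to the Shephard--Todd--Solomon identity $\sum_{w\in W_0} s^{\dim V^w}=\prod_{i=1}^n(s+e_i)$; Chevalley's theorem and Solomon's freeness theorem for invariant differential forms give the bigraded identity $\frac{1}{\size{W_0}}\sum_{w}\det(1+uw)/\det(1-tw)=\prod_{i}(1+ut^{e_i})/(1-t^{d_i})$; and your specialization $1+u=s(1-t)$, $t\to 1$ is the standard device for isolating the eigenvalue $1$. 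The limit computations check out: eigenvalue-one factors contribute exactly $s$, factors with $\lambda_j\neq 1$ tend to $1$ (legitimate since $1-\lambda_j\neq 0$ and the sum over $W_0$ is finite), and on the right $(1+ut^{e_i})/(1-t^{d_i})\to (s+e_i)/d_i$ with $\prod_i d_i = \size{W_0}$, after which $s=1/x$ recovers the stated product. The only caveats are ones you acknowledge yourself: the proof is a reduction to two black boxes (Chevalley's theorem and Solomon's freeness theorem, the latter being precisely the content of \cite{Solomon63}), and your sketched alternative via the characteristic polynomial of the reflection arrangement would additionally require the Orlik--Solomon identification of $\#\{w \mid V^w = X\}$ with a M\"obius number, which, as you say, is of comparable depth. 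Given that the paper quotes this result exactly because it is deep and standard, your write-up is a faithful and correct reconstruction of the standard proof rather than a new or more elementary one.
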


We now return to the structure of the affine Coxeter group $W$
constructed from a root system $\Phi$.

\begin{defn}[Coroots]
  For each root $\alpha \in \Phi$, there is a corresponding
  \emph{coroot} $\alpha^\vee = c \alpha$ with $c= \frac{2}{\langle
    \alpha,\alpha \rangle}$.  The collection of all coroots is denoted
  $\Phi^\vee$ and the integral linear combinations of vectors in
  $\Phi^\vee$ is a lattice $L = \Z \Phi^\vee \cong \Z^n$ called the
  \emph{coroot lattice}.  The isomorphism with $\Z^n$ is a result of
  the existence of a subset $\Delta^\vee \subset \Phi^\vee$ of
  linearly independent vectors that form a basis for $L$.
\end{defn}

The coroot lattice describes the translations in $W$.

\begin{defn}[Translations]
  A \emph{translation} is a map that shifts each point by the same
  vector $\lambda$ and we let $t_\lambda$ denote the map sending each
  point $x \in V$ to $x + \lambda$.  For each $\alpha \in \Phi$
  consider the product $r_{\alpha,1} r_\alpha$.  Reflecting through
  parallel hyperplanes produces a translation in the $\alpha$
  direction and the exact translation is $t_{\alpha^\vee}$.  Because
  $t_\mu t_\nu = t_{\mu + \nu}$, there is a translation of the form
  $t_\lambda$ in $W$ for each $\lambda \in L$ and the set $T = \{
  t_\lambda \mid \lambda \in L\}$ forms an abelian subgroup of $W$. In
  fact, these are the only translations that are contained in $W$.
\end{defn}

The subgroup $T$ is also the kernel of the map $p\colon W \onto W_0$
and $W$ can be viewed as a semidirect product $W =W_0 \ltimes T$.

\begin{prop}[Normal forms]\label{prop:normal-form}
  For each element $w\in W$ there is a unique factorization $w =
  t_\lambda w_0$ where $t_\lambda$ is a translation with $\lambda\in
  L$ and $w_0$ is an element in $W_0$.
\end{prop}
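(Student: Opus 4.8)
The plan is to exploit the two complementary ways in which $W_0$ sits inside the exact sequence furnished by $p\colon W \onto W_0$. By the discussion preceding the statement, $T = \{t_\lambda \mid \lambda \in L\}$ is precisely the kernel of $p$, so we have a short exact sequence $1 \to T \to W \xrightarrow{\,p\,} W_0 \to 1$. The crucial preliminary observation, which reconciles the two roles that $W_0$ plays, is that $p$ restricts to the identity on the embedded copy of $W_0$: that copy is generated by the reflections $r_\alpha = r_{\alpha,0}$, and $p$ by definition sends each $r_{\alpha,0}$ to $r_\alpha$, i.e.\ to itself. Since $p$ fixes a generating set of $W_0$ it fixes all of $W_0$, so the embedding $W_0 \hookrightarrow W$ is a section of $p$ and the sequence splits. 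This already identifies $W$ with the semidirect product $W_0 \ltimes T$; what remains is to package the splitting as the asserted normal form, with the translation written on the left.

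For existence, given $w \in W$ set $w_0 = p(w) \in W_0$ and regard $w_0$ as an element of $W$ via the embedding. Because $p$ fixes $w_0$, the product $t = w\, w_0^{-1}$ satisfies $p(t) = p(w)\,p(w_0)^{-1} = w_0 w_0^{-1} = 1$, so $t \in \ker p = T$. Hence $t = t_\lambda$ for some $\lambda \in L$, and rearranging gives $w = t_\lambda w_0$, a factorization of the required form.

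For uniqueness, suppose $t_\lambda w_0 = t_\mu w_0'$ with $\lambda,\mu \in L$ and $w_0, w_0' \in W_0$. Applying $p$ and using that every translation lies in $\ker p$ while $p$ fixes $W_0$, we obtain $w_0 = p(t_\lambda w_0) = p(t_\mu w_0') = w_0'$. Cancelling $w_0$ then forces $t_\lambda = t_\mu$, and since a translation is determined by its displacement vector we conclude $\lambda = \mu$. I do not expect a genuine obstacle here: the entire statement is a split-exact-sequence argument, and the only point requiring care is the verification that the embedded and quotient copies of $W_0$ agree under $p$, which is exactly where the identity $p(r_{\alpha,0}) = r_\alpha$ does the work.
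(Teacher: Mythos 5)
Your proof is correct and takes essentially the same route as the paper: existence via $w_0 = p(w)$ and the observation that $w w_0^{-1} \in \ker p = T$, and uniqueness by applying $p$ to pin down $w_0$ and then recovering $\lambda$. The only difference is cosmetic --- you make explicit the (implicitly used) fact that $p$ fixes the embedded copy of $W_0$, and you get $\lambda$ by group cancellation where the paper evaluates $w$ at the origin.
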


\begin{proof}
  If such an expression exists then $w_0$ is the image of $w$ under
  the map $p\colon W \onto W_0$ and $\lambda$ is the image of the
  origin under $w$ (keeping in mind that elements of $W$ act on $V$ in
  function notation so that composition is from right to left).  This
  proves uniqueness.  For existence define $w_0 = p(w) \in W_0 \subset
  W$ and consider the element $ww_0^{-1}$.  Since it is in the kernel
  of $p$, it is translation in the form $t_\lambda$ for some
  $\lambda\in L$.
\end{proof}

\section{Translation dimension}

In this section we introduce the notion of the dimension of a vector
in the coroot lattice.  As above, $W$ is an affine Coxeter group,
constructed from a root system $\Phi$, acting on a vector space $V$.

\begin{defn}[Real dimension]\label{def:real-dimension}
  We call a subspace of $V$ spanned by a collection of coroots in
  $\Phi^\vee$ a \emph{real coroot subspace} and we say a vector
  $\lambda$ has \emph{real dimension} $k$ when $\lambda$ is contained
  in a $k$-dimensional coroot subspace but it is not contained in any
  real coroot subspace of strictly smaller dimension.  Since it is
  always possible find a coroot basis for each coroot subspace, real
  dimension $k$ means that $\lambda = c_1 \alpha_1^\vee + c_2
  \alpha_2^\vee + \cdots + c_k \alpha_k^\vee$ for some $c_i \in \R$
  and $\alpha_i^\vee \in \Phi^\vee$ but that no such expression exists
  with fewer summands.
\end{defn}

\begin{rem}[Non-unique subspaces]\label{rem:non-unique}
  There may be more than one $k$-dimensional coroot subspace
  containing a vector $\lambda \in L$ of real dimension $k$ because
  coroot subspaces are not closed under intersection.  Consider the
  $D_n$ root system $\Phi = \{\pm e_i \pm e_j \mid 1 \leq i < j \leq
  n\}$.  Every root has norm $2$ so $\alpha = \alpha^\vee$ and $\Phi =
  \Phi^\vee$.  The translation $t_\lambda$ with $\lambda = 2e_1 \in L$
  is $2$-dimensional because $2e_1$ is in the span of $\{e_1 + e_j,
  e_1 - e_j\}$ for any $j>1$ but as $j$ varies we get distinct
  $2$-planes containing $\lambda$.  They intersect along the
  $e_1$-axis but this line is not a real coroot subspace.
\end{rem}

The following definition is very similar.

\begin{defn}[Integral dimension]\label{def:dimension}
  We say a vector $\lambda \in L$ has \emph{integral dimension} $k$
  when $\lambda$ can be expressed in the form $\lambda = c_1
  \alpha_1^\vee + c_2 \alpha_2^\vee + \cdots + c_k \alpha_k^\vee$ for
  some $c_i \in \Z$ and $\alpha_i^\vee \in \Phi^\vee$ but no such
  expression exists with fewer summands.  The restriction to integral
  coefficients means that only vectors in $L$ have a dimension in this
  sense.  When $\lambda$ has integral dimension $k$ we say $\lambda$
  is \emph{$k$-dimensional} as is the corresponding translation
  $t_\lambda$. Note that the real dimension of $\lambda$ is a lower
  bound on its integral dimension.
\end{defn}

\begin{prop}[Dimensions]\label{prop:dimension}
  If $W$ is an affine Coxeter group constructed from a root system
  $\Phi$ and naturally acts faithfully and cocompactly on $V = \R^n$,
  then every vector in its coroot lattice has integral dimension at
  most $n$ and $n$-dimensional vectors do exist.
\end{prop}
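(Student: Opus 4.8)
The plan is to prove the two assertions separately: the upper bound is essentially immediate from the existence of a coroot basis, while the existence of an $n$-dimensional vector reduces to a short genericity argument. Throughout I would work with the real dimension as a stepping stone, since Definition~\ref{def:dimension} already records that the real dimension of a vector is a lower bound on its integral dimension.

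For the upper bound I would invoke the basis $\Delta^\vee \subset \Phi^\vee$ of the coroot lattice guaranteed in the definition of the coroot lattice. Because $L \cong \Z^n$, this basis has exactly $n$ elements, so every $\lambda \in L$ can be written as an integral linear combination $\lambda = \sum_{i=1}^{n} c_i \delta_i^\vee$ with $\delta_i^\vee \in \Delta^\vee$ and $c_i \in \Z$. This is precisely an expression of the form appearing in Definition~\ref{def:dimension} using $n$ coroots, so the integral dimension of $\lambda$ is at most $n$.

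For the existence statement I would pass to the real dimension. By Definition~\ref{def:real-dimension}, a vector has real dimension $n$ exactly when it lies in no real coroot subspace of dimension strictly less than $n$. Hence it suffices to exhibit a single lattice vector $\lambda \in L$ that avoids every proper real coroot subspace: such a $\lambda$ has real dimension $n$, so its integral dimension is at least $n$, and combined with the upper bound its integral dimension must equal $n$.

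Producing this $\lambda$ is where I expect the only real (and modest) obstacle. Since $\Phi^\vee$ is finite, there are finitely many real coroot subspaces, hence finitely many proper ones $U_1, \dots, U_m$, each of dimension at most $n-1$. Each intersection $U_j \cap L$ is a subgroup of $L$ of rank at most $n-1$, and a rank-$n$ lattice cannot be the union of finitely many subgroups of smaller rank. I would justify this by a direct counting estimate: the number of points of $L$ in a box of radius $N$ grows like $N^n$, whereas the number lying in any fixed $U_j \cap L$ grows only like $N^{n-1}$, so for large $N$ the box must contain a point of $L$ outside $\bigcup_j U_j$. (Alternatively one may cite the elementary lemma that a group is never the union of finitely many subgroups of infinite index.) Any such point has real dimension $n$, which finishes the argument. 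I would note finally that this reasoning never uses irreducibility of $\Phi$, so the reducible case needs no separate treatment.
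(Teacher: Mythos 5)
Your proposal is correct and follows essentially the same route as the paper: the upper bound via the coroot basis $\Delta^\vee$ of $L \cong \Z^n$, and existence via a lattice vector avoiding the finitely many proper coroot subspaces, whose real dimension $n$ forces integral dimension $n$. The only difference is that you explicitly justify (by lattice-point counting) that $L$ cannot be covered by finitely many proper subspaces, a step the paper's proof leaves implicit.
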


\begin{proof}
  The first assertion is a consequence of the fact that $L \cong \Z^n$
  is a lattice with a $\Z$-basis in $\Phi^\vee$.  For the second assertion
  note that any vector $\lambda \in L$ that does not lie in the union
  of the finite number of proper subspaces through the origin that are
  spanned by coroots has real dimension $n$ and thus integral
  dimension at least $n$.
\end{proof}

The integral dimension of a vector $\lambda \in L$ bounds how hard it
is to move the origin to $\lambda$ using reflections.  To prove this
assertion we need an elementary result about factorizations.

\begin{lem}[Rewriting factorizations]\label{lem:rewriting}
  Let $W$ be a Coxeter group with reflections $R$ and let $w = r_1 r_2
  \cdots r_m$ be a reflection factorization. For any selection $1 \leq
  i_1 < i_2 < \cdots < i_k \leq m$ of positions there is a length~$m$
  reflection factorization of $w$ whose first $k$ reflections are
  $r_{i_1} r_{i_2} \cdots r_{i_k}$ and another length~$m$ reflection
  factorization of $w$ where these are the last $k$ reflections in the
  factorization.
\end{lem}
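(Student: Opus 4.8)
The plan is to reduce everything to a single \emph{elementary swap} on adjacent reflections and then to slide the chosen reflections into position one at a time. For any two reflections $r_a,r_b\in R$ one has the identities $r_a r_b = (r_a r_b r_a)\,r_a$ and $r_a r_b = r_b\,(r_b r_a r_b)$, each of which rewrites an adjacent pair without changing its product. Since $R$ consists of all $W$-conjugates of elements of $S$, it is closed under conjugation by any element of $W$, so the new entries $r_a r_b r_a$ and $r_b r_a r_b$ again lie in $R$. Hence applying either identity inside $w = r_1 r_2 \cdots r_m$ yields another length-$m$ reflection factorization of the same $w$. Reading the second identity as $(r_a,r_b)\mapsto(r_b,\,r_b r_a r_b)$ moves the right reflection one step to the \emph{left} while leaving its name unchanged and merely conjugating the reflection it passes; symmetrically, the first identity moves the left reflection one step to the right.

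For the first assertion I would bring the chosen reflections to the front one at a time, processing $j = 1,2,\ldots,k$. The inductive invariant after stage $j$ is that the factorization begins with $r_{i_1} r_{i_2} \cdots r_{i_j}$ and that every entry in a position $> i_j$ still equals the original reflection there. Granting the invariant after stage $j-1$, the strict inequality $i_{j-1}<i_j$ ensures that $r_{i_j}$ is still present, untouched, at position $i_j$; at stage $j$ I apply the leftward move repeatedly to slide $r_{i_j}$ down to position $j$ and stop. This keeps its name $r_{i_j}$ intact, only conjugates (hence keeps inside $R$) the entries it passes in positions $j,\ldots,i_j-1$, leaves positions $1,\ldots,j-1$ — which hold the already-placed $r_{i_1},\ldots,r_{i_{j-1}}$ — untouched, and leaves every position $> i_j$ unchanged, re-establishing the invariant. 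After stage $k$ the first $k$ reflections are exactly $r_{i_1} r_{i_2} \cdots r_{i_k}$, as desired.

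The second assertion follows by the mirror-image argument, sliding the chosen reflections to the \emph{right} using the first identity. Equivalently, one can apply the first assertion to the reflection factorization $w^{-1} = r_m r_{m-1} \cdots r_1$ at the corresponding positions $m+1-i_k < \cdots < m+1-i_1$ to write $w^{-1} = r_{i_k} r_{i_{k-1}} \cdots r_{i_1}\, s_{k+1}\cdots s_m$ for some $s_j\in R$, and then invert; since reflections are involutions this gives $w = s_m \cdots s_{k+1}\, r_{i_1} r_{i_2}\cdots r_{i_k}$, with the chosen block now occupying the final $k$ positions.

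I anticipate no conceptual difficulty; the single place to be careful is the indexing in the induction, where the hypothesis $i_1 < i_2 < \cdots < i_k$ is precisely what guarantees that sliding $r_{i_j}$ leftward to position $j$ neither displaces a not-yet-processed target from its original position nor perturbs the reflections already fixed at the front. The remaining content — that each elementary swap preserves both the product and membership in $R$ — is immediate from the two displayed identities together with the conjugation-closure of $R$.
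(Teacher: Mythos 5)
Your proof is correct and follows essentially the same route as the paper's: both rest on the adjacent-swap identities $r_a r_b = (r_a r_b r_a)\,r_a$ and $r_a r_b = r_b\,(r_b r_a r_b)$, valid because $R$ is closed under conjugation, iterated to slide the selected reflections to the front or the back. The paper simply states ``iterating these rewriting operations'' where you spell out the induction and its invariant explicitly, so yours is a fleshed-out version of the same argument.
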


\begin{proof}
  Because reflections are closed under conjugation, for any
  reflections $r$ and $r'$ there exist reflections $r''$ and $r'''$
  such that $r r' = r'' r$ and $r' r = r r'''$.  Iterating these
  rewriting operations allows us to move the selected reflections into
  the desired positions without altering the length of the
  factorization.
\end{proof}

In preparation for the next result recall that $t_{\alpha^\vee} =
r_{\alpha,1}r_\alpha$ and note that because the hyperplanes
$H_{\alpha,i}$ are evenly spaced, the product $r_{\alpha,i+1}
r_{\alpha,i}$ is also $t_{\alpha^\vee}$ for every $i\in \Z$. More
generally $r_{\alpha,i+j} r_{\alpha,i} = t_{j \alpha^\vee}$.

\begin{prop}[Moving points]\label{prop:moving-pts}
  If $\lambda \in L$ has integral dimension~$k$ then $\lr(t_\lambda)
  \leq 2k$ and there is an element $u \in W$ with $\lr(u) \leq k$ that
  sends the origin to $\lambda$.
\end{prop}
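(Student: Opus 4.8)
The plan is to treat the two assertions separately: the bound $\lr(t_\lambda) \le 2k$ is essentially bookkeeping, while producing the length-$k$ element requires a geometric construction, so I will spend my effort there.

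For the first assertion I would start from the integral dimension hypothesis. Since $\lambda$ has integral dimension $k$, write $\lambda = c_1\alpha_1^\vee + \cdots + c_k\alpha_k^\vee$ with each $c_i \in \Z$ and each $\alpha_i^\vee \in \Phi^\vee$. Because $t_\mu t_\nu = t_{\mu+\nu}$, this gives $t_\lambda = t_{c_1\alpha_1^\vee}\cdots t_{c_k\alpha_k^\vee}$, and the identity $r_{\alpha,i+j}r_{\alpha,i} = t_{j\alpha^\vee}$ (taken with $i=0$ and $j=c_i$) rewrites each factor as a product of two reflections, $t_{c_i\alpha_i^\vee} = r_{\alpha_i,c_i}r_{\alpha_i}$. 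Concatenating these gives an explicit reflection factorization of $t_\lambda$ of length $2k$, so $\lr(t_\lambda)\le 2k$.

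For the second assertion the idea is to move the origin to $\lambda$ one coroot direction at a time, spending only a single reflection per step, which is what halves the count. Set $p_0 = 0$ and $p_j = c_1\alpha_1^\vee + \cdots + c_j\alpha_j^\vee$ for $1\le j\le k$, so that $p_k = \lambda$ and each increment $p_j - p_{j-1} = c_j\alpha_j^\vee$ points along $\alpha_j^\vee$. At stage $j$ I want a single reflection that carries $p_{j-1}$ to $p_j$; since the displacement is in the $\alpha_j^\vee$ direction, the natural candidate is a reflection $r_{\alpha_j,d}$ across a hyperplane parallel to $H_{\alpha_j}$. Using the explicit formula $r_{\alpha,d}(x) = x - (\langle x,\alpha\rangle - d)\alpha^\vee$, the requirement $r_{\alpha_j,d}(p_{j-1}) = p_{j-1} + c_j\alpha_j^\vee$ forces the single value $d = \langle p_{j-1},\alpha_j\rangle + c_j$.

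The one point that genuinely needs checking, and which I expect to be the crux, is that this $d$ is an integer, so that $r_{\alpha_j,d}$ really is a reflection in $R$ rather than an isometry outside $W$. This is exactly where the crystallographic hypothesis enters: the Cartan pairings $\langle \alpha_i^\vee,\alpha_j\rangle$ are integers, and since $p_{j-1} = \sum_{i<j} c_i\alpha_i^\vee$ has integer coefficients, $\langle p_{j-1},\alpha_j\rangle \in \Z$; combined with $c_j \in \Z$ this yields $d\in\Z$. Setting $s_j = r_{\alpha_j,d}$ and $u = s_k s_{k-1}\cdots s_1$, a short induction shows $s_j\cdots s_1(0) = p_j$, whence $u(0) = p_k = \lambda$. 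As a product of $k$ reflections, $u$ satisfies $\lr(u)\le k$, which completes the argument.
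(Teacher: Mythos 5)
Your proof is correct, and while the first half coincides with the paper's argument, the second half takes a genuinely different route. The paper obtains the length-$k$ element $u$ by recycling the same $2k$-term factorization $t_\lambda = (r_{\alpha_1,c_1}r_{\alpha_1})\cdots(r_{\alpha_k,c_k}r_{\alpha_k})$ and invoking its rewriting lemma (Lemma~\ref{lem:rewriting}): since reflections are closed under conjugation, the $k$ origin-fixing reflections $r_{\alpha_1},\dots,r_{\alpha_k}$ can be shuffled to the end of the factorization without changing its length, and then the product of the first $k$ (conjugated) reflections is the desired $u$, because $t_\lambda(0)=u(r_{\alpha_1}\cdots r_{\alpha_k}(0))=u(0)$. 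Your construction instead builds $u$ directly, one reflection per coroot direction, solving for the unique admissible hyperplane offset $d_j=\langle p_{j-1},\alpha_j\rangle+c_j$ at each stage and verifying $d_j\in\Z$ via the crystallographic integrality $\langle \alpha_i^\vee,\alpha_j\rangle\in\Z$; that integrality check is indeed the crux of your version, and it is correctly discharged. The trade-off: the paper's argument is shorter and purely formal---it never touches the inner product and needs only closure of $R$ under conjugation---whereas yours is more explicit, producing concrete reflections $r_{\alpha_j,d_j}$ and an explicit word for $u$, at the cost of invoking the crystallographic condition that the paper's proof of this proposition never needs. Both are complete proofs of the statement.
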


\begin{proof}
  By the definition of integral dimension there is an equation of the
  form $\lambda = c_1 \alpha_1^\vee + c_2 \alpha_2^\vee + \cdots + c_k
  \alpha_k^\vee$ and the formulas $t_{\mu + \nu} = t_\mu t_\nu$ and
  $t_{j\alpha^\vee} = r_{\alpha,j} r_\alpha$ show that $t_\lambda$ has
  a length $2k$ reflection factorization of the form $t_\lambda =
  (r_{\alpha_1,c_1} r_{\alpha_1}) (r_{\alpha_2,c_2}
  r_{\alpha_2})\cdots (r_{\alpha_k,c_k} r_{\alpha_k})$.  Next, by
  Lemma~\ref{lem:rewriting} there is another length $2k$ reflection
  factorization of $t_\lambda$ where the final $k$ reflections are
  $r_{\alpha_1} r_{\alpha_2} \cdots r_{\alpha_k}$.  Since all of these
  reflections fix the origin, the product of the first $k$ reflections
  is an element $u$ that sends the origin to $\lambda$.
\end{proof}

Note that just as there can be distinct minimal expressions for
$\lambda$ (Remark~\ref{rem:non-unique}), there are often distinct
elements of reflection length $k$ that send the origin to $\lambda$.

\begin{thm}[Equivalent definitions]\label{thm:equiv}
  For each $\lambda \in L$ the real dimension of $\lambda$, the
  integral dimension of $\lambda$ and the minimal reflection length of
  an element sending the origin to $\lambda$ are equal.
\end{thm}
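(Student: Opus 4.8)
The plan is to prove the three numbers are equal by closing a short cycle of inequalities. Write $a$, $b$, and $c$ for the real dimension of $\lambda$, the integral dimension of $\lambda$, and the minimal reflection length $c=\min\{\lr(u)\mid u(0)=\lambda\}$ of an element sending the origin to $\lambda$, respectively. Two of the required inequalities are already available: $a\le b$ is recorded at the end of Definition~\ref{def:dimension}, and $c\le b$ is exactly Proposition~\ref{prop:moving-pts}. It therefore suffices to prove $b\le c$ and $b\le a$, since then $b\le c\le b$ forces $b=c$ and $a\le b\le a$ forces $a=b$.

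For $b\le c$ I would start from an element $u$ realizing the minimum, so $u(0)=\lambda$ and $u$ admits a reflection factorization $u=r_1r_2\cdots r_c$ with each $r_i=r_{\alpha_i,c_i}$ for some $\alpha_i\in\Phi$ and $c_i\in\Z$. Using the identity $r_{\alpha,c}=t_{c\alpha^\vee}\,r_\alpha$ and pushing every translation to the left past the linear reflections via $r_\alpha\,t_\nu=t_{r_\alpha(\nu)}\,r_\alpha$ in the semidirect product $W=W_0\ltimes T$, the translation part of $u$ computes to $\lambda=\sum_{i=1}^{c}c_i\,(r_{\alpha_1}\cdots r_{\alpha_{i-1}})(\alpha_i^\vee)$. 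The crucial observation is that each partial product $r_{\alpha_1}\cdots r_{\alpha_{i-1}}$ lies in $W_0$, and $W_0$ permutes the coroots, so every vector $(r_{\alpha_1}\cdots r_{\alpha_{i-1}})(\alpha_i^\vee)$ is again a coroot. Since the coefficients $c_i$ are integers, $\lambda$ is an integral combination of $c$ coroots, giving $b\le c$. (The same expression shows $\lambda$ lies in the span of these $c$ coroots, re-deriving $a\le c$, though we will not need it.)

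The remaining inequality $b\le a$ is the crux, and it is where I expect the real work to lie. By definition of real dimension, $\lambda$ lies in a coroot subspace $U$ with $\dim U=a$; set $\Sigma=\Phi^\vee\cap U$, a finite root system that spans $U$ and hence has rank $a$. Everything reduces to the lattice statement $L\cap U=\Z\Sigma$: granting this, a simple system of $\Sigma$ furnishes a $\Z$-basis of $L\cap U$ consisting of $a$ coroots, so $\lambda\in L\cap U$ is an integral combination of $a$ coroots and $b\le a$. The containment $\Z\Sigma\subseteq L\cap U$ is immediate; the content is the reverse, that $\Z\Sigma$ is \emph{saturated} in $L$. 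My plan is to note that $\Sigma$ consists exactly of the coroots lying in $U$, equivalently orthogonal to $U^\perp$, so by Steinberg's theorem the reflections $\{r_\gamma\mid\gamma^\vee\in\Sigma\}$ generate the pointwise stabilizer of $U^\perp$ in $W_0$; thus $\Sigma$ is a \emph{parabolic} subsystem of $\Phi^\vee$. For a parabolic subsystem the saturation is elementary: after conjugating by an element of $W_0$ to a standard parabolic $\Phi^\vee_I=\Phi^\vee\cap\mathrm{span}\{\delta_i\mid i\in I\}$, its simple coroots $\{\delta_i\mid i\in I\}$ form part of the basis $\Delta^\vee$ of $L$, so $\Z\Phi^\vee_I=\bigoplus_{i\in I}\Z\delta_i$ is a direct summand of $L$, and this primitivity transfers back under the $W_0$-conjugation, which preserves $L$.

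The main obstacle is precisely this inequality $b\le a$. The other steps are bookkeeping with the structure $W=W_0\ltimes T$ of Proposition~\ref{prop:normal-form} and the integrality built into the definitions, whereas the equality of real and integral dimension genuinely uses that a coroot subspace meets $\Phi^\vee$ in a parabolic subsystem. Its delicacy is foreshadowed by Remark~\ref{rem:non-unique}, where distinct $2$-planes through $\lambda$ intersect in a line that is \emph{not} a coroot subspace; I would take particular care that the appeal to Steinberg's theorem is what converts the geometric notion of real dimension into the combinatorial notion of integral dimension, and I expect the verification that $\Z\Sigma$ is saturated in $L$ to be the point requiring the most attention.
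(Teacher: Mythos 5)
Your proof is correct, but it is organized differently from the paper's, and one of its legs is genuinely new. Both arguments rest on the same two pillars: Proposition~\ref{prop:moving-pts} (your $c\le b$) and the saturation statement that a coroot subspace $V'$ containing $\lambda$ satisfies $L\cap V'=\Z(\Phi^\vee\cap V')$ (your $b\le a$). The paper closes a single three-step cycle $k_d\ge k_m\ge k_r\ge k_d$, where the middle leg is only the real-span observation --- each $r_{\alpha,i}$ moves points in the $\alpha^\vee$ direction, so $\lambda$ lies in the real span of the coroots appearing in any factorization of $u$ --- and the third leg is precisely your saturation claim, asserted in one sentence (``$\Phi'=\Phi\cap V'$ satisfies the requirements to be a root system and $L'=L\cap V'$ is its coroot lattice'') with no justification. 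You do two things differently. First, your proof of $b\le c$ strengthens the paper's middle leg from real to integral: writing $r_{\alpha_i,c_i}=t_{c_i\alpha_i^\vee}r_{\alpha_i}$ and pushing translations left exhibits $\lambda$ as a $\Z$-combination of $W_0$-images of coroots, which are again coroots. This buys a cleaner logical structure: the equality of integral dimension and minimal reflection length ($b=c$) is then established with no appeal to saturation at all, whereas in the paper all three equalities depend on it through the cycle. Second, you actually prove the saturation claim (Steinberg's theorem identifies $\Sigma=\Phi^\vee\cap U$ as the coroot set of a parabolic subgroup; conjugating to a standard parabolic makes $\Z\Phi^\vee_I$ a direct summand of $L$ spanned by part of the basis $\Delta^\vee$; the conjugation preserves $L$), which the paper leaves entirely implicit. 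What the paper's version buys is brevity; what yours buys is a self-contained treatment of the one step that, as you correctly note and as Remark~\ref{rem:non-unique} foreshadows, carries the real content. The only point you compress is the standard fact that the reflections in $wW_Iw^{-1}$ are exactly the $r_{w\alpha}$ with $\alpha\in\Phi_I$, which is what carries $\Sigma$ onto $\Phi^\vee_I$ and legitimizes the transfer of primitivity.
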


\begin{proof}
  Let $k_r$, $k_d$ and $k_m$ be three numbers at issue in the order
  listed.  Proposition~\ref{prop:moving-pts} shows that $k_d \geq
  k_m$.  Next, let $u$ be an element sending the origin to $\lambda$.
  Because a reflection $r_{\alpha,i}$ move points in the $\alpha^\vee$
  direction, $\lambda$ is in the span of the coroots associated to the
  reflections in a minimal length reflection factorization of $u$.
  Thus $k_m \geq k_r$.  And finally, let $V'$ be a $k_r$-dimensional
  coroot subspace of $V$ containing $\lambda$.  The set $\Phi' = \Phi
  \cap V'$ satisfies the requirements to be a root system and $L' = L
  \cap V'$ is its coroot lattice.  Because $\lambda$ lies in the
  lattice $L'$, $k_r \geq k_d$.  The combination $k_d \geq k_m \geq
  k_r \geq k_d$ shows all three are equal.
\end{proof}

\section{Bounding reflection length}

We are now ready to prove our main result.

\begin{prop}[Bounds]\label{prop:bound}
  If $W$ is a affine Coxeter group and $w \in W$ has the form $w =
  t_\lambda w_0$ where $t_\lambda$ is a $k$-dimensional translation
  and $w_0 \in W_0$ is an element fixing the origin then $k\leq \lr(w)
  \leq k+n$.  In particular, every element $w \in W$ has $\lr(w) \leq
  2n$.
\end{prop}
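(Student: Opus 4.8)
The plan is to handle the two inequalities separately: the lower bound $\lr(w) \geq k$ will fall out immediately from the machinery of the previous section, while the upper bound $\lr(w)\leq k+n$ will come from splitting $w$ into a ``translation part'' and a ``spherical part.''

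For the lower bound I would observe that $w = t_\lambda w_0$ sends the origin to $\lambda$: since $w_0 \in W_0$ fixes the origin and $t_\lambda$ then shifts it by $\lambda$, we have $w(0)=\lambda$. Theorem~\ref{thm:equiv} identifies $k$, the integral (equivalently real) dimension of $\lambda$, with the minimal reflection length among all elements of $W$ carrying the origin to $\lambda$. Because $w$ is one such element, its reflection length is at least this minimum, so $\lr(w)\geq k$.

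For the upper bound I would manufacture an efficient factorization $w = uv$ in which $u$ absorbs the translation and $v$ lands in the finite group. By Theorem~\ref{thm:equiv} (or directly by Proposition~\ref{prop:moving-pts}) there is an element $u\in W$ with $\lr(u)\leq k$ that also sends the origin to $\lambda$. Set $v := u^{-1}w$. Then $v(0)=u^{-1}(\lambda)=0$, so $v$ fixes the origin and therefore lies in $W_0$ by Definition~\ref{def:finite}. Since $R_0\subseteq R$, any factorization of $v$ into reflections of $W_0$ is also a reflection factorization in $W$, giving $\lr(v)\leq \lrz(v)\leq n$ by Corollary~\ref{cor:spherical}. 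The triangle inequality then yields $\lr(w)=\lr(uv)\leq \lr(u)+\lr(v)\leq k+n$.

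The final sentence of the statement is then immediate: Proposition~\ref{prop:dimension} gives $k\leq n$, so $\lr(w)\leq k+n\leq 2n$ for every $w\in W$. The only step carrying real content is the recognition that $u^{-1}w$ fixes the origin and hence collapses into the finite subgroup $W_0$; once that is in place, the triangle inequality together with the spherical bound of Corollary~\ref{cor:spherical} finishes the argument. I do not anticipate a genuine obstacle here, since the substantive work—equating the dimension of $\lambda$ with a reflection length—was already completed in Theorem~\ref{thm:equiv}.
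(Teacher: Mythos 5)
Your proof is correct and follows essentially the same route as the paper: the lower bound via Theorem~\ref{thm:equiv}, the upper bound by choosing $u$ with $\lr(u)\leq k$ sending the origin to $\lambda$, observing that $v=u^{-1}w$ fixes the origin and hence lies in $W_0$, and applying Corollary~\ref{cor:spherical} with the triangle inequality. The only cosmetic difference is that for the final ``every element'' claim you should also cite Proposition~\ref{prop:normal-form} (as the paper does) to justify that every $w\in W$ actually has the form $t_\lambda w_0$.
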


\begin{proof}
  The lower bound, $\lr(w) \geq k$, follows from Theorem
  \ref{thm:equiv}.  By Proposition~\ref{prop:moving-pts} and Theorem
  \ref{thm:equiv} there is an element $u$ with $\lr(u) = k$ that sends
  the origin to $\lambda$.  Because the element $v_0 = u^{-1}w$ fixes
  the origin, it is in $W_0$ and $\lr(v_0) \leq n$ by
  Corollary~\ref{cor:spherical}.  Thus $\lr(w) = \lr(u v_0) \leq
  \lr(u) + \lr(v_0) \leq k+n$.  For the final assertion note that
  every element can be written in this form
  (Proposition~\ref{prop:normal-form}) and every translation is
  $k$-dimensional for some $k\leq n$
  (Proposition~\ref{prop:dimension}).
\end{proof}

Based on the proof of this proposition, one might conjecture that each
$w = t_\lambda w_0$ with $k$-dimensional $\lambda$ has another
factorization $w = uv_0$ with $v_0 \in W_0$ and $\lr(w) = \lr(u) +
\lr(v_0)$. This is not always the case.

\begin{exmp}[Exact bounds]
  Let $\alpha_{ij}$ denote the vector $e_i - e_j$, let $\Phi = \{
  \alpha_{ij} \mid 1 \leq i,j \leq 4\}$ be the $A_3$ root system with
  coroot lattice $L$, and consider the elements $w_{ijk} =
  r_{\alpha_{12},i}\ r_{\alpha_{23},j}\ r_{\alpha_{34},k}$.  By
  Corollary~\ref{cor:lin-ind} $\lr(w_{ijk}) = 3$.  All of these
  elements are sent under $p\colon W \onto W_0$ to $w_{000} =
  r_{\alpha_{12}} r_{\alpha_{23}}r_{\alpha_{34}}$ which cyclically
  permutes the four coordinates moving the point $(x,y,z,w)$ to
  $(w,x,y,z)$.  The minimum length reflection factorizations of
  $w_{000}$ in $W_0$ are well-known and they are encoded by maximal
  chains in the lattice $NC_4$ of non-crossing partitions on four
  elements \cite{McCammond06}.  Every reflection in $R_0$ occurs in some
  factorization of $w_{000}$ but there exists a pair of reflections,
  $r_{\alpha_{13}}$ and $r_{\alpha_{24}}$, representing a ``crossing''
  partition that cannot both occur in the same factorization.  By
  varying $i$, $j$ and $k$, the elements $w_{ijk}$ produce all
  elements of the form $t_\lambda w_{000}$ with $\lambda \in L$. Thus
  for a careful choice of values, the vector $\lambda$ is in the
  span of $\alpha_{13}$ and $\alpha_{24}$ and this is the
  unique minimal dimensional coroot subspace containing $\lambda$.  If
  $u$ is a product of two reflections sending such a $\lambda$ to the
  origin, then the coroots involved in this product must be
  $\alpha_{13}$ and $\alpha_{24}$.  As a consequence, there is no
  length~$3$ reflection factorization of the form $w_{ijk} = uv_0$
  since the projection of such a factorization to $W_0$ would be a
  length~$3$ reflection factorization of $w_{000}$ containing both
  $r_{\alpha_{13}}$ and $r_{\alpha_{24}}$ which is known not to exist.
\end{exmp}

\begin{prop}[Optimality]\label{prop:optimal}
  If $W$ is a affine Coxeter group and $w = t_\lambda$ is a
  $k$-dimensional translation then $\lr(w) = 2k$.  In particular,
  $n$-dimensional translations have reflection length $2n$.
\end{prop}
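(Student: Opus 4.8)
The upper bound is already in hand: Proposition~\ref{prop:moving-pts} gives $\lr(t_\lambda)\le 2k$, so the entire content is the matching lower bound $\lr(t_\lambda)\ge 2k$. The plan is to fix a minimal reflection factorization $t_\lambda = r_1 r_2\cdots r_m$ with $r_i = r_{\alpha_i,c_i}$ and $m = \lr(t_\lambda)$, and to show $m\ge 2k$. Two features of such a factorization drive the argument. First, applying the quotient map $p\colon W\onto W_0$ kills the translation, so the linear parts satisfy $r_{\alpha_1} r_{\alpha_2}\cdots r_{\alpha_m} = 1$ in $W_0$. Second, since each reflection $r_{\alpha_i,c_i}$ displaces points only in the $\alpha_i^\vee$ direction, the vector $\lambda = (r_1\cdots r_m)(0)$ lies in the coroot subspace $U = \operatorname{span}(\alpha_1^\vee,\dots,\alpha_m^\vee)$. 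Because $U$ is a coroot subspace containing $\lambda$, Theorem~\ref{thm:equiv} gives $k\le \dim U$, so it suffices to prove that $\dim U\le m/2$.

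The heart of the matter is therefore the following purely spherical claim: if a product $r_{\alpha_1}\cdots r_{\alpha_m}$ of reflections in a finite Coxeter group equals the identity, then $\dim\operatorname{span}(\alpha_1,\dots,\alpha_m)\le m/2$. I would prove this by tracking the partial products $g_j = r_{\alpha_1}\cdots r_{\alpha_j}$ and their reflection lengths $\ell_j = \lrz(g_j)$. By Proposition~\ref{prop:spherical}, $\ell_j$ equals the codimension of $\operatorname{Fix}(g_j)$, and multiplying by a single reflection changes $\ell_j$ by exactly $\pm 1$: it cannot change by more than $1$ by the triangle inequality, and it cannot stay the same because multiplying by a reflection reverses the determinant and hence the parity of reflection length. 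Since $\ell_0 = \ell_m = 0$, the sequence $\ell_0,\ell_1,\dots,\ell_m$ is a lattice path from $0$ to $0$ with steps $\pm1$, so it has exactly $m/2$ up-steps and $m/2$ down-steps.

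The key observation is that each down-step contributes nothing new to the span. Writing $U_{j-1} = \operatorname{span}(\alpha_1,\dots,\alpha_{j-1})$, a short computation shows a step $j$ is a down-step precisely when $\alpha_j$ lies in the moved space $\operatorname{Fix}(g_{j-1})^{\perp}$. But each of $r_{\alpha_1},\dots,r_{\alpha_{j-1}}$ fixes $U_{j-1}^{\perp}$ pointwise, so $g_{j-1}$ does as well, whence $\operatorname{Fix}(g_{j-1})^{\perp}\subseteq U_{j-1}$. Thus every down-step root $\alpha_j$ already lies in the span of its predecessors, and $U$ is spanned by the at most $m/2$ up-step roots alone. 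This yields $\dim U\le m/2$, hence $m\ge 2\dim U\ge 2k$, and combining with the upper bound gives $\lr(t_\lambda) = 2k$; the final sentence then follows by taking $k=n$ and invoking Proposition~\ref{prop:dimension}.

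The step I expect to require the most care is the moved-space dichotomy: verifying that a step is a down-step exactly when the new root lies in $\operatorname{Fix}(g_{j-1})^{\perp}$, together with the parity argument pinning each step to $\pm1$. Both are standard consequences of Carter's description of reflection length recorded in Proposition~\ref{prop:spherical}, but I would want to isolate them as a short lemma before assembling the counting argument above. It is worth noting that only the easy half of the dichotomy is actually needed, namely that a down-step forces $\alpha_j$ into $U_{j-1}$, which keeps the supporting lemma lightweight.
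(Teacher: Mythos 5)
Your proof is correct, and it takes a genuinely different route from the paper's. The paper argues globally: given a reflection factorization $t_\lambda = r_1 r_2 \cdots r_m$, the coroots involved must span a subspace of dimension at least $k$, so Lemma~\ref{lem:rewriting} moves $k$ reflections with linearly independent coroots to the front; writing $w = uv$ accordingly and projecting by $p\colon W \onto W_0$, Carter's independence criterion (Proposition~\ref{prop:spherical}) gives $\lrz(p(u)) = k$, and since $p(w) = 1$ forces $p(v) = p(u)^{-1}$, Proposition~\ref{prop:quotient-length} shows the remaining $m-k$ reflections must number at least $k$, whence $m \geq 2k$. You argue locally instead: tracking the prefix products of the projected factorization $r_{\alpha_1}\cdots r_{\alpha_m} = 1$ in $W_0$, using Carter's codimension formula plus determinant parity to see that the reflection lengths trace a $\pm 1$ lattice path from $0$ to $0$ (hence exactly $m/2$ up-steps), and then showing every down-step root lies in the span of its predecessors because the moved space $\mathrm{Fix}(g_{j-1})^\perp$ is contained in that span. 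This isolates a clean standalone spherical fact---any factorization of the identity into $m$ reflections has root span of dimension at most $m/2$---which in fact holds in any orthogonal group, and $m \geq 2k$ then follows from $\lambda \in U$ and Theorem~\ref{thm:equiv}. As for what each buys: the paper's route reuses machinery it needs anyway (Lemma~\ref{lem:rewriting} is already the engine of Proposition~\ref{prop:moving-pts}) and never needs the moved-space dichotomy; your route dispenses with the rewriting lemma entirely but requires the supporting lemma you flag, which is Carter-style and not among the results quoted in the paper. Your reduction to the ``easy half'' is the right economy, and it does go through in two lines: if $\alpha \notin \mathrm{Fix}(g)^\perp$, then any $x$ with $g r_\alpha x = x$ satisfies $r_\alpha x - x \in \mathbb{R}\alpha \cap \mathrm{Fix}(g)^\perp = 0$, so $\mathrm{Fix}(g r_\alpha) = \mathrm{Fix}(g) \cap H_\alpha$ and the step is an up-step; the contrapositive is exactly what your counting argument needs.
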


\begin{proof}
  Being a translation in $W$, $w = t_\lambda$ for some $\lambda \in L$
  and by definition of dimension $\lambda = c_1 \alpha_1^\vee + c_2
  \alpha_2^\vee + \cdots + c_k \alpha_k^\vee$ for $\alpha_i^\vee \in
  \Phi^\vee$ and $c_i \in \Z$.  By Proposition~\ref{prop:moving-pts}
  $\lr(w) \leq 2k$.  To show that $2k$ is also a lower bound suppose
  that $w = r_1 r_2 \cdots r_m$ with each $r_i = r_{\alpha_i,c_i} \in
  R$.  Because the reflection $r_i$ moves points in the
  $\alpha_i^\vee$ direction and $\lambda$ has real dimension $k$
  (Theorem~\ref{thm:equiv}), the coroots $\alpha_i^\vee$ must span a
  subspace of dimension at least $k$.  Next, we may assume that the
  first $k$ reflections have linearly independent coroots since it is
  possible to use Lemma~\ref{lem:rewriting} to move any $k$
  reflections with linearly independent coroots to the front.  This
  alters the reflection factorization of $w$ but leaves the total
  length unchanged.  Let $u = r_1r_2 \cdots r_k$ and let $v = r_{k+1}
  \cdots r_m$ so that $w=uv$, let $u_0 = p(u)$ and $v_0 = p(v)$ where
  $p$ is the homomorphism $p\colon W \onto W_0$ and note that
  $\lrz(u_0) = k$ by Proposition~\ref{prop:spherical}.  Since $w$ is a
  translation, $p(w)$ is the identity and $v_0 = u_0^{-1}$.  Finally,
  by Proposition~\ref{prop:quotient-length} we have $\lr(v) \geq
  \lrz(v_0)= \lrz(u_0^{-1}) = \lrz(u_0) = k$, which means that any
  reflection factorization of $v$ has length at least $k$.  This
  implies that $m \geq 2k$ and as a consequence every reflection
  factorization of $w$ has length at least $2k$.
\end{proof}

\renewcommand{\themainthm}{\ref{main:affine}}
\begin{mainthm}[Explicit affine upper bounds]
  If $W$ is an affine Coxeter group that naturally acts faithfully and
  cocompactly on $\R^n$ then every element of $W$ has reflection
  length at most $2n$ and there exist elements in $W$ with reflection
  length equal to $2n$.
\end{mainthm}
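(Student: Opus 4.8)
The plan is to assemble the theorem from the three propositions just established, after first reconciling the two descriptions of $W$. The hypothesis that $W$ acts faithfully and cocompactly on $\R^n$ is, by Remark~\ref{rem:no-finite-factors}, exactly the condition that $W$ arises from a (possibly reducible) crystallographic root system $\Phi$ with no finite irreducible components, acting on $V=\R^n$. This identification is what lets me invoke the translation-dimension machinery developed above; none of that machinery assumed irreducibility, so the reducible case needs no separate treatment and Proposition~\ref{prop:reducible} is not required here.

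For the upper bound I would simply quote the final assertion of Proposition~\ref{prop:bound}: writing any $w\in W$ in its normal form $w=t_\lambda w_0$ with $\lambda$ of some integral dimension $k\le n$ and $w_0\in W_0$, that proposition gives $\lr(w)\le k+n\le 2n$. For optimality I would produce an explicit element of reflection length exactly $2n$. By Proposition~\ref{prop:dimension} the coroot lattice $L$ contains a vector $\lambda$ of integral dimension $n$ (any $\lambda$ avoiding the finitely many proper coroot subspaces through the origin will do), and by Proposition~\ref{prop:optimal} the corresponding translation satisfies $\lr(t_\lambda)=2n$. Together these two steps establish both halves of the statement.

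At the level of the theorem itself there is no real obstacle: all the work has been front-loaded into the preceding propositions, and the theorem is an immediate corollary of Propositions~\ref{prop:bound}, \ref{prop:dimension}, and~\ref{prop:optimal}. Tracing back, the genuinely delicate point is the lower bound $\lr(t_\lambda)\ge 2k$ inside Proposition~\ref{prop:optimal}. The subtlety there is that a cheap factorization of a $k$-dimensional translation is blocked by the quotient $p\colon W\onto W_0$: after moving $k$ reflections with linearly independent coroots to the front to form an element $u$ with $\lrz(p(u))=k$, the remaining tail $v$ projects to $p(u)^{-1}$, which still has full reflection length $k$ in the finite group $W_0$, forcing $v$ to use at least $k$ further reflections. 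Once that lower bound and the $\le k+n$ upper bound are in hand, nothing remains but to cite the relevant results in the right order.
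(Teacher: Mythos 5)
Your proposal is correct and follows essentially the same route as the paper: the upper bound is exactly the final assertion of Proposition~\ref{prop:bound}, and optimality comes from combining Proposition~\ref{prop:dimension} (existence of an $n$-dimensional vector in $L$) with Proposition~\ref{prop:optimal} ($\lr(t_\lambda)=2k$ for a $k$-dimensional translation). Your additional remarks---that Remark~\ref{rem:no-finite-factors} reconciles the cocompactness hypothesis with the root-system construction, and that the real work lies in the lower bound of Proposition~\ref{prop:optimal}---are accurate glosses on the same argument rather than a departure from it.
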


\begin{proof}
  Proposition~\ref{prop:bound} shows that $2n$ is an upper bound.
  Propositions~\ref{prop:dimension} and~\ref{prop:optimal} show that
  it is optimal.
\end{proof}

\begin{rem}[Finite factors]
  The cocompactness assumption in Theorem~\ref{main:affine}
  essentially means that $W$ does not have any finite irreducible
  factors (Remark~\ref{rem:no-finite-factors}).  When finite
  irreducible factors are present, the optimal upper bound can be
  lowered accordingly.  In particular, if $W = W_f \times W_a$ where
  $W_f$ is the product of the finite irreducible factors, $W_a$ is the
  product of the affine irreducible factors and $\R^n = \R^{n_f}
  \oplus \R^{n_a}$ is the orthogonal decomposition preserved by this
  splitting then by Proposition~\ref{prop:reducible},
  Corollary~\ref{cor:spherical} and Theorem~\ref{main:affine} the
  optimal upper bound for the reflection length function on $W$ is
  $n_f + 2n_a = 2n-n_f$.
\end{rem}

We should note that we have not found an elementary way to compute
$\lr(w)$ exactly for a generic $w$ in an affine Coxeter group $W$.
Techiques for easily computing $\lr(w)$ would be useful.  For example,
they would enable one to investigate whether the polynomial
$f_\lambda(x) =\sum_{w\in W_0} x^{\lr(t_\lambda w)}$ for $\lambda \in
L$ has properties similar to $f_0(x)$, the Solomon polynomial
discussed in Proposition~\ref{prop:solomon}.  All we can say at the
moment (as a consequence of Proposition \ref{prop:bound}) is that when
$\lambda$ is $k$-dimensional, the polynomial $f_{\lambda}(x)$ has
degree at most $k+n$ and is divisible by $t^k$.

\section{An unbounded example}\label{sec:unbounded}

When $W$ is a Coxeter group that is neither finite nor affine we
conjecture the following.

\begin{conj}[No upper bounds]\label{conj:no-bound}
  The reflection length function on a Coxeter group $W$ has a uniform
  upper bound if and only if $W$ is of spherical or affine type.
\end{conj}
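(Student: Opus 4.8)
The statement is an equivalence whose two directions are of completely different character. For the \emph{if} direction, suppose $W$ is of spherical or affine type, by which I mean that every irreducible factor of $W$ is finite or affine. Proposition~\ref{prop:reducible} shows that reflection length is additive across the finitely many irreducible factors, so it suffices to bound $\lr$ on each factor separately: finite factors are handled by Corollary~\ref{cor:spherical} and affine factors by Theorem~\ref{main:affine}. This direction therefore requires nothing beyond results already in hand.

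The \emph{only if} direction is the real content. First I would reduce to the irreducible case. A group that is \emph{not} of spherical or affine type must, by the classification of irreducible types, have at least one irreducible factor $W'$ that is neither finite nor affine, i.e.\ one whose canonical bilinear form is indefinite. Moreover Proposition~\ref{prop:reducible} gives \emph{exact} additivity, so for an element $w'$ supported on the factor $W'$ (trivial in all other factors) we have $\lr(w') = \lr_{R'}(w')$ computed intrinsically in $W'$. Consequently it is enough to prove that $\lr$ is unbounded on a single irreducible indefinite Coxeter group $W'$.

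To produce the lower bound I would use homogeneous quasimorphisms. A homogeneous quasimorphism $\phi\colon W'\to\R$ satisfies $\phi(g^N)=N\phi(g)$ together with a bounded defect $|\phi(gh)-\phi(g)-\phi(h)|\le D$, and any such $\phi$ vanishes on torsion elements, since $g^k=1$ forces $k\phi(g)=\phi(g^k)=0$. Every reflection $r\in R$ has order two, so $\phi(r)=0$, and feeding a factorization $w=r_1\cdots r_m$ through the defect inequality gives $|\phi(w)|\le (m-1)D$, whence $\lr(w)\ge |\phi(w)|/D$. Thus a single \emph{unbounded} homogeneous quasimorphism forces $\lr$ to be unbounded: picking $g$ with $\phi(g)\neq 0$ yields $\lr(g^N)\ge N|\phi(g)|/D\to\infty$. (This also explains the \emph{if} direction conceptually, since affine groups are virtually abelian, hence amenable, and admit no unbounded homogeneous quasimorphisms.) To manufacture such a $\phi$ I would exploit the proper cocompact action of $W'$ on its CAT(0) Davis complex together with the structure theory of Caprace and Fujiwara guaranteeing that an irreducible non-affine Coxeter group contains a rank-one (contracting) isometry $g$; a Bestvina--Fujiwara type construction then supplies an unbounded homogeneous quasimorphism nonzero on $g$.

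The main obstacle is exactly this last input, and it is worth recording why the finite-dimensional arguments used elsewhere cannot replace it. In the nondegenerate indefinite canonical representation of $W'$, the Cartan--Dieudonn\'e theorem writes \emph{every} orthogonal transformation as a product of at most $\dim V$ reflections of the ambient orthogonal group, so the move-set and Carter-type bounds behind Proposition~\ref{prop:spherical} saturate near $\dim V$ and can never certify reflection length beyond a fixed constant. Unboundedness must instead come from the restriction of the permissible roots to $\Phi$, an essentially combinatorial constraint invisible to linear algebra. Verifying the existence of a rank-one isometry in \emph{every} irreducible non-affine type, and checking the properness and acylindricity hypotheses needed to extract a quasimorphism that does not vanish on it, is the genuinely hard step and the reason the statement is recorded here only as a conjecture.
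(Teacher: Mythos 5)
First, a point of calibration: the statement you were asked to prove is a \emph{conjecture}, and the paper contains no proof of it. The paper's entire contribution toward it is the ``if'' direction (which follows from results already established) plus a single piece of evidence for the ``only if'' direction: for the free Coxeter group on three generators, a criterion of Dyer \cite{Dyer01} shows that the $n$-th power of the product of the three standard generators has reflection length $n+2$, so reflection length is unbounded in that one case (Section~\ref{sec:unbounded}). Your proposal should therefore be compared against that partial evidence, not against a complete argument.

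Your ``if'' direction is correct and coincides with what the paper does: additivity over irreducible factors (Proposition~\ref{prop:reducible}), the spherical bound (Corollary~\ref{cor:spherical}), and the affine bound (Theorem~\ref{main:affine}), exactly as in the paper's remark on finite factors. Your ``only if'' direction is a genuinely different route from the paper's example, and it is in fact a viable one. The reduction to a single irreducible, non-spherical, non-affine factor via exact additivity is sound, and the quasimorphism mechanism is correct: a homogeneous quasimorphism $\phi$ with defect $D$ vanishes on every involution, hence on every reflection, so the defect inequality applied to a length-$m$ reflection factorization gives $|\phi(w)|\le (m-1)D$ and thus $\lr(w)\ge |\phi(w)|/D$; homogeneity then makes $\lr(g^N)$ grow linearly in $N$ once $\phi(g)\ne 0$. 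Moreover, the input you flag as ``the genuinely hard step'' does not need to be proved from scratch: Caprace and Fujiwara (building on Bestvina--Fujiwara) established precisely that every irreducible, non-spherical, non-affine, finitely generated Coxeter group admits nonzero (indeed an infinite-dimensional space of) homogeneous quasimorphisms, via rank-one isometries of the Davis complex. With that result cited as a black box, your outline actually closes the conjecture in full, which is strictly more than the paper itself establishes; the conjecture was indeed later settled in the affirmative (by Duszenko, by a different, more elementary hyperbolic-geometry argument). One small repair: your parenthetical claim that affine Coxeter groups admit no unbounded homogeneous quasimorphisms ``because they are amenable'' needs the additional observation that Coxeter groups have finite abelianization --- on an amenable group, homogeneous quasimorphisms are homomorphisms to $\R$, and such homomorphisms can certainly be unbounded in general (e.g.\ on $\Z$); they are forced to vanish here only because there are no nonzero homomorphisms $W\to\R$.
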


We are not currently able to prove this conjecture but we can show
that it holds in at least one special case.  Let $W$ be the free
Coxeter group on three generators, i.e. the group generated by three
involutions and with no other relations.  Using a criterion of Dyer
\cite{Dyer01} we can show that the $n$-th power of the product of the
three standard generators has reflection length $n+2$.  In particular,
the reflection length function is unbounded on $W$ and
Conjecture~\ref{conj:no-bound} holds in this case.  We note that $W$
can be viewed as a reflection group acting on the hyperbolic plane
generated by the reflections in the sides of an ideal triangle.
Proving that all hyperbolic triangle Coxeter groups have unbounded
reflection length would be first step towards proving
Conjecture~\ref{conj:no-bound}.

\medskip
\noindent {\bf Acknowledgements} We would like to thank Matthew Dyer
and John Stembridge for helpful early conversations about the ideas in
this note and Rob Sulway for conversations about related topics that
arise in his dissertation.


\end{document}